\theoremstyle{plain}
\newtheorem{theorem}{Theorem}
\numberwithin{equation}{section}
\begin{document}

\title {Nonelliptic functions from $F(\frac{1}{6}, \frac{5}{6} ; \frac{1}{2} ; \bullet)$ }

\date{}

\author[P.L. Robinson]{P.L. Robinson}

\address{Department of Mathematics \\ University of Florida \\ Gainesville FL 32611  USA }

\email[]{paulr@ufl.edu}

\subjclass{} \keywords{}

\begin{abstract}

As contributions to the Ramanujan theory of elliptic functions to alternative bases, Li-Chien Shen has developed families of elliptic functions from the hypergeometric functions $F(\tfrac{1}{3}, \tfrac{2}{3}; \tfrac{1}{2} ; \bullet)$ and $F(\tfrac{1}{4}, \tfrac{3}{4}; \tfrac{1}{2} ; \bullet)$. We apply his methods to the hypergeometric function $F(\tfrac{1}{6}, \tfrac{5}{6}; \tfrac{1}{2} ; \bullet)$. 

\end{abstract}

\maketitle

\medbreak

\section*{Introduction}

\medbreak 

Li-Chien Shen has shown in [4] and [6] how to extract elliptic functions from incomplete integrals of the hypergeometric functions $F(\tfrac{1}{3}, \tfrac{2}{3}; \tfrac{1}{2} ; \bullet)$ and $F(\tfrac{1}{4}, \tfrac{3}{4}; \tfrac{1}{2} ; \bullet)$. These contributions to the Ramanujan theory of elliptic functions to alternative bases are motivated by a reformulation of the corresponding classical theory. In the classical theory, if $0 < \kappa < 1$ then the relation 
$$u = \int_0^{\sin \phi} F(\tfrac{1}{2}, \tfrac{1}{2}; \tfrac{1}{2} ; \kappa^2 t^2) \, \frac{{\rm d} t}{\sqrt{1 - t^2}} $$
leads by inversion to $\phi = {\rm am} \, u$ along with the classical Jacobian elliptic functions ${\rm sn} \, u = \sin \phi$, ${\rm cn} \, u = \cos \phi$, and ${\rm dn} \, u = \frac{{\rm d} \phi}{{\rm d} u} = \sqrt{1 - \kappa^2 \sin^2 \phi}$. 

\medbreak 

In [4] the `classical' hypergeometric function is replaced by $F(\tfrac{1}{3}, \tfrac{2}{3}; \tfrac{1}{2} ; \bullet)$: this leads to an elliptic function ${\rm dn}_3 = \frac{{\rm d} \phi}{{\rm d} u}$; the analogous functions ${\rm cn}_3$ and ${\rm sn}_3$ are not elliptic, though their squares are. In [6] the `classical' hypergeometric function is replaced by $F(\tfrac{1}{4}, \tfrac{3}{4}; \tfrac{1}{2} ; \bullet)$: this leads to an elliptic function ${\rm dn}_2 = \sqrt{1 - \kappa^2 \sin^2 \phi}$; in this case, the analogues ${\rm cn}_2$ and ${\rm sn}_2^2$ are elliptic, though ${\rm sn}_2$ is not. In [2] and [3] we reviewed these constructions from a somewhat different perspective. 

\medbreak 

In [5] Shen considered the `signature 6' theory, but only in reference to the hypergeometric function $F(\tfrac{1}{6}, \tfrac{5}{6}; 1 ; \bullet)$. Here, we apply the methods of [4] and [6] to incomplete integrals of $F(\tfrac{1}{6}, \tfrac{5}{6}; \tfrac{1}{2} ; \bullet)$. Our findings in this case are essentially negative. The corresponding analogues of ${\rm sn}$ and ${\rm cn}$ are not elliptic; neither are their squares (nor their ratios). The analogue of the third Jacobi function in the form ${\rm dn} = \sqrt{1 - \kappa^2 \sin^2 \phi}$ is not elliptic; neither is its square. We leave open the question whether the analogue of the third Jacobi function in the form ${\rm dn} \, u = \frac{{\rm d} \phi}{{\rm d} u}$ is elliptic.
\medbreak 

\section*{The nonelliptic functions} 

\medbreak 

Let $0 < \kappa < 1$ and let $\lambda: = \sqrt{1 - \kappa^2} \in (0, 1)$. Write 
$$u = \int_0^{\sin \phi} F(\tfrac{1}{6}, \tfrac{5}{6}; \tfrac{1}{2} ; \kappa^2 t^2) \, \frac{{\rm d} t}{\sqrt{1 - t^2}}.$$
In a neighbourhood of the origin as a fixed point, the assignment $\phi \mapsto u$ inverts to $u \mapsto \phi$; further, there exists a function $\psi$ such that $\psi(0) = 0$ and $\sin \psi = \kappa \sin \phi$. 

\medbreak 

In these terms, we introduce functions $s, c, d, \delta$ as follows: 
$$s = \sin \phi, \; \; c = \cos \phi, \; \; d = \cos \psi, \; \; \delta = \phi\,'.$$ 
Originally, these functions are defined on a small disc about $0$; we wish to decide whether any of them (and functions related to them) admit elliptic extensions. 

\medbreak 

It is convenient to note that the functions $s, c$ and $d$ satisfy the same quadratic relations that are satisfied by the Jacobian elliptic functions ${\rm sn}$, ${\rm cn}$ and ${\rm dn}$ of modulus $\kappa$: namely, 
$$c^2 + s^2 = 1 \; \; {\rm and} \; \; d^2 + \kappa^2 s^2 = 1,$$
the second of these because 
$$1 - d^2 = 1 - \cos^2 \psi = \sin^2 \psi = \kappa^2 \sin^2 \phi = \kappa^2 s^2.$$
It is also convenient to recall the standard hypergeometric identity 
$$F(\tfrac{1}{6}, \tfrac{5}{6}; \tfrac{1}{2} ; \sin^2 z) = \frac{\cos \tfrac{2}{3} z}{\cos z}.$$

\medbreak 

We now have the following formula for $\delta = \phi\,'$. 

\medbreak 

\begin{theorem} \label{delta} 
$$\delta = \frac{\cos  \psi}{\cos \tfrac{2}{3} \psi}.$$
\end{theorem}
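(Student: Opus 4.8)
The plan is to differentiate the defining integral for $u$ with respect to $\phi$ and then invert. First I would apply the fundamental theorem of calculus together with the chain rule, remembering that the upper limit of integration is $\sin \phi$. This gives
$$\frac{{\rm d} u}{{\rm d} \phi} = F(\tfrac{1}{6}, \tfrac{5}{6}; \tfrac{1}{2} ; \kappa^2 \sin^2 \phi) \cdot \frac{\cos \phi}{\sqrt{1 - \sin^2 \phi}}.$$
Working in a neighbourhood of the origin, where $\cos \phi > 0$, the factor $\sqrt{1 - \sin^2 \phi}$ equals $\cos \phi$, so the two cosines cancel and
$$\frac{{\rm d} u}{{\rm d} \phi} = F(\tfrac{1}{6}, \tfrac{5}{6}; \tfrac{1}{2} ; \kappa^2 \sin^2 \phi).$$

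Next I would invoke the defining relation $\sin \psi = \kappa \sin \phi$, which rewrites the argument of the hypergeometric function as $\kappa^2 \sin^2 \phi = \sin^2 \psi$. The standard hypergeometric identity recalled above, applied with $z = \psi$, then yields
$$\frac{{\rm d} u}{{\rm d} \phi} = F(\tfrac{1}{6}, \tfrac{5}{6}; \tfrac{1}{2} ; \sin^2 \psi) = \frac{\cos \tfrac{2}{3} \psi}{\cos \psi}.$$
Finally, since $\delta = \phi\,' = {\rm d}\phi / {\rm d} u$ is the reciprocal of ${\rm d} u / {\rm d} \phi$ (both derivatives being nonzero near the origin, where the inversion $u \mapsto \phi$ is valid), I would simply take reciprocals to obtain the claimed formula $\delta = \cos \psi / \cos \tfrac{2}{3} \psi$.

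I do not anticipate a genuine obstacle here, as the argument is a direct computation resting entirely on the fundamental theorem of calculus, the substitution supplied by $\psi$, and the quoted hypergeometric identity. The only point requiring any care is the choice of sign in simplifying $\sqrt{1 - \sin^2 \phi}$ to $\cos \phi$; this is legitimate precisely because we restrict to a small disc about $0$, on which $\cos \phi$ is positive. The same restriction also guarantees that ${\rm d} u / {\rm d} \phi$ does not vanish, which is what justifies passing to the reciprocal in the last step.
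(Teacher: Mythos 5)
Your proposal is correct and follows exactly the paper's own argument: differentiate the defining integral, observe the cancellation of $\cos\phi$ with $\sqrt{1-\sin^2\phi}$, substitute $\sin^2\psi=\kappa^2\sin^2\phi$, apply the quoted hypergeometric identity, and take reciprocals. The only difference is that you spell out the cancellation and the sign considerations explicitly, which the paper leaves implicit.
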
 

\begin{proof} 
From the definition of $\phi \mapsto u$ we derive 
$$\frac{{\rm d} u}{{\rm d} \phi} = F(\tfrac{1}{6}, \tfrac{5}{6}; \tfrac{1}{2} ; \kappa^2 \sin^2 \phi) = F(\tfrac{1}{6}, \tfrac{5}{6}; \tfrac{1}{2} ; \sin^2 \psi)$$
whence by inversion and the hypergeometric identity recalled above 
$$\phi\,' = \frac{1}{F(\tfrac{1}{6}, \tfrac{5}{6}; \tfrac{1}{2} ; \sin^2 \psi)} = \frac{\cos  \psi}{\cos \tfrac{2}{3} \psi}.$$
\end{proof} 

\medbreak 

This result prompts us to introduce a new function (not operator!) $\partial$ by 
$$\partial = \cos \tfrac{2}{3} \psi$$
in terms of which the identity in Theorem \ref{delta} may be rewritten as 
$$\delta = \frac{d}{\partial}.$$ 

\medbreak 

The functions $d$ and $\delta$ are quite simply related. 

\medbreak 

\begin{theorem} \label{ratio}
$$\delta^3 - 2 d^2 \delta^3 - 3 d \delta^2 + 4 d^3 = 0.$$
\end{theorem}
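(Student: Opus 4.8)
The plan is to feed the three relations already in hand, namely $d = \cos\psi$, $\partial = \cos\tfrac{2}{3}\psi$, and $\delta = d/\partial$ from Theorem~\ref{delta}, into the claimed cubic and watch it collapse to a single elementary trigonometric identity. First I would substitute $\delta = d/\partial$ into $\delta^3 - 2d^2\delta^3 - 3d\delta^2 + 4d^3 = 0$; collecting terms gives
$$\frac{d^3}{\partial^3}(1 - 2d^2) - \frac{3d^3}{\partial^2} + 4d^3 = 0,$$
and extracting the common factor $d^3/\partial^3$ (legitimate wherever $d$ and $\partial$ are nonzero) reduces the statement to
$$4\partial^3 - 3\partial = 2d^2 - 1.$$

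The key step is to recognize both sides of this reduced identity as $\cos 2\psi$. The triple-angle formula $\cos 3\theta = 4\cos^3\theta - 3\cos\theta$ applied with $\theta = \tfrac{2}{3}\psi$ yields
$$4\partial^3 - 3\partial = 4\cos^3\tfrac{2}{3}\psi - 3\cos\tfrac{2}{3}\psi = \cos 2\psi,$$
while the double-angle formula gives
$$2d^2 - 1 = 2\cos^2\psi - 1 = \cos 2\psi.$$
The two expressions therefore agree, which establishes the reduced identity, and hence the original relation, at least on the open set where $d\,\partial \neq 0$.

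Finally, since $d$, $\delta$ and $\partial$ are all analytic on the disc about $0$ on which they were defined, the left-hand side of the claimed relation is an analytic function that vanishes on a set with nonempty interior (the complement of the zero locus of $d\,\partial$); the identity theorem then propagates the vanishing to the whole disc, disposing of the points where $d$ or $\partial$ happens to vanish. I expect this bookkeeping around the clearing of $d^3/\partial^3$ to be the only genuine subtlety: the trigonometric heart of the matter is simply the observation that the triple-angle reduction of $\cos\tfrac{2}{3}\psi$ and the double-angle reduction of $\cos\psi$ both produce $\cos 2\psi$.
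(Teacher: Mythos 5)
Your proof is correct and rests on exactly the same key fact as the paper's: the double-angle and triple-angle formulas both evaluate to $\cos 2\psi$, giving $2d^2 - 1 = 4\partial^3 - 3\partial$, combined with $\partial = d/\delta$ from Theorem \ref{delta}. The only difference is direction (you reduce the claimed cubic to the trigonometric identity, while the paper derives the cubic from it by clearing $\delta^3$), plus your added identity-theorem remark about the zero locus of $d\,\partial$, which is harmless extra care.
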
 

\begin{proof} 
By trigonometric duplication and triplication, 
$$2 \cos^2 \psi - 1 = \cos 2 \psi = 4 \cos^3  \tfrac{2}{3} \psi - 3 \cos  \tfrac{2}{3} \psi$$ 
whence Theorem \ref{delta} yields 
$$2 d^2 - 1 = 4 \partial^3 - 3 \partial = 4 (\tfrac{d}{\delta})^3 - 3 (\tfrac{d}{\delta})$$
and therefore 
$$(2 d^2 - 1) \delta^3 = 4 d^3 - 3 d \delta^2.$$
\end{proof} 

\medbreak 

As a step towards deciding whether or not the function $d$ admits an elliptic extension, we present the following differential equation. 

\medbreak 

\begin{theorem} \label{DEdelta}
$$d^2 \, (d\,')^2 = \delta^2 \,(1 - d^2) \, (d^2 - \lambda^2).$$ 
\end{theorem}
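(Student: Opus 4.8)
The plan is to compute the derivative $d\,'$ explicitly in terms of the basic functions $s, c, d, \delta$ and then check that each side of the asserted identity collapses to the same expression. Since $d = \cos \psi$, the chain rule gives $d\,' = - \sin \psi \, \psi\,'$, so the first task is to find $\psi\,'$. The natural way to get this is to differentiate the defining relation $\sin \psi = \kappa \sin \phi = \kappa s$. This yields $\cos \psi \, \psi\,' = \kappa \cos \phi \, \phi\,'$, that is $d \, \psi\,' = \kappa c \, \delta$, and hence
$$\psi\,' = \frac{\kappa c \, \delta}{d}.$$

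Substituting this back and again using $\sin \psi = \kappa s$, I would obtain
$$d\,' = - \sin \psi \, \psi\,' = - \kappa s \cdot \frac{\kappa c \, \delta}{d} = - \frac{\kappa^2 s c \, \delta}{d}.$$
Squaring and clearing the denominator then produces the left-hand side of the theorem directly, namely
$$d^2 \, (d\,')^2 = \kappa^4 s^2 c^2 \, \delta^2.$$

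It remains to show that the right-hand side reduces to the same thing, which I would do by translating the two factors $1 - d^2$ and $d^2 - \lambda^2$ into the $s, c$ variables by means of the quadratic relations recorded before the theorem. The relation $d^2 + \kappa^2 s^2 = 1$ gives at once $1 - d^2 = \kappa^2 s^2$; and since $\lambda^2 = 1 - \kappa^2$ while $d^2 = 1 - \kappa^2 s^2$, one finds $d^2 - \lambda^2 = \kappa^2 - \kappa^2 s^2 = \kappa^2 c^2$ upon using $c^2 + s^2 = 1$. Therefore
$$\delta^2 \, (1 - d^2)(d^2 - \lambda^2) = \delta^2 \cdot \kappa^2 s^2 \cdot \kappa^2 c^2 = \kappa^4 s^2 c^2 \, \delta^2,$$
which agrees with the left-hand side and settles the identity.

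I do not expect any genuine obstacle in this argument: the computation is short and the two expressions meet at the common value $\kappa^4 s^2 c^2 \delta^2$. The only steps demanding care are the correct application of the chain rule to the implicitly defined function $\psi$ (so that the factor $\delta = \phi\,'$ appears through $\psi\,'$) and the faithful bookkeeping that converts between the angular variables $\phi, \psi$ and the algebraic functions $s, c, d, \delta$. Once the formula for $d\,'$ is in hand, the remainder is a direct substitution driven entirely by the two Pythagorean-type identities $c^2 + s^2 = 1$ and $d^2 + \kappa^2 s^2 = 1$.
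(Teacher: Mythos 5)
Your proposal is correct and follows essentially the same route as the paper: differentiate $\sin\psi = \kappa\sin\phi$ to get $\psi\,' = \kappa c\,\delta/d$, hence $d\,' = -\sin\psi\,\psi\,'$, and then identify $(d\,d\,')^2$ with $(1-d^2)(d^2-\lambda^2)\delta^2$ via the quadratic relations. The only cosmetic difference is that you reduce both sides to the common value $\kappa^4 s^2 c^2 \delta^2$, whereas the paper rewrites $\sin^2\psi\,(\kappa^2 - \kappa^2\sin^2\phi)\,\delta^2$ directly into the stated form.
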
 

\begin{proof} 
From the definition $\sin \psi = \kappa \sin \phi$ there follows 
$$\psi \,' \cos \psi = \kappa \phi\,' \cos \phi$$
so that 
$$\psi\,' = \kappa \cos \phi \, \frac{\phi\,'}{\cos \psi} = \kappa \cos \phi \, \frac{\delta}{d}$$
while from the definition $d = \cos \psi$ there follows 
$$d\,' = - \psi\,' \, \sin \psi = - \sin \psi \, \kappa \cos \phi \, \frac{\delta}{d}$$
so that 
$$(d \, d\,')^2 = \sin^2 \psi \, (\kappa^2 - \kappa^2 \sin^2 \phi) \, \delta^2 = (1 - \cos^2 \psi) \, (\kappa^2 - \sin^2 \psi) \, \delta^2 = (1 - d^2) \, (d^2 - \lambda^2) \, \delta^2$$
as announced. 

\end{proof} 

\medbreak 

This differential equation involves not only $d$ but also $\delta$. We may remove the additional dependence on $\delta$ by substitution of 
$$\frac{d^2}{\delta^2} = \frac{(1 - d^2) (d^2 - \lambda^2)}{(d\,')^2}$$ 
into the squared identity 
$$(2 d^2 - 1)^2 = \big[ 4 (\tfrac{d}{\delta})^2 - 3 \big]^2 (\tfrac{d}{\delta})^2$$
coming from Theorem \ref{ratio}. After elementary rearrangement, the result is the following differential equation satisfied by $d$ alone: 
$$(2 d^2 - 1)^2 (d\,')^6 = (1 - d^2) (d^2 - \lambda^2) \big[ 4 (1 - d^2) (d^2 - \lambda^2) - 3 (d\,')^2 \big]^2.$$

\bigbreak 

Alternatively, we may exchange the extra dependence on $\delta$ for an extra dependence on $\partial$, with the following result. 

\medbreak 

\begin{theorem} \label{DEpartial}
$$\partial^2 \, (d\,')^2 = (1 - d^2) \, (d^2 - \lambda^2).$$ 
\end{theorem}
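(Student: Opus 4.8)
The plan is to deduce Theorem \ref{DEpartial} directly from Theorem \ref{DEdelta} by trading the auxiliary dependence on $\delta$ for a dependence on $\partial$. The single ingredient that links the two is Theorem \ref{delta} in the form $\delta = d/\partial$, which I rewrite as $d = \partial\,\delta$ and hence as $d^2 = \partial^2 \delta^2$.

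First I would record the content of Theorem \ref{DEdelta}, namely
$$d^2 \, (d\,')^2 = \delta^2 \, (1 - d^2) \, (d^2 - \lambda^2).$$
The only manipulation required is to replace the factor $d^2$ on the left-hand side by $\partial^2 \delta^2$, using the relation just noted. This produces
$$\partial^2 \delta^2 \, (d\,')^2 = \delta^2 \, (1 - d^2) \, (d^2 - \lambda^2),$$
after which a single factor of $\delta^2$ cancels from both sides to leave the asserted identity.

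To justify the cancellation I would observe that $\delta$ does not vanish near the origin: at $\phi = 0$ one has $\psi = 0$, so $d = \cos 0 = 1$ and $\partial = \cos 0 = 1$, whence $\delta = d/\partial = 1$. Thus $\delta^2 \neq 0$ on a neighbourhood of the fixed point, the division is legitimate there, and the resulting identity between analytic functions persists under analytic continuation. To keep this point tidy I would in fact frame the step as multiplying the \emph{target} identity through by $\delta^2$ and reducing it to Theorem \ref{DEdelta}, rather than dividing; this makes transparent that Theorems \ref{DEdelta} and \ref{DEpartial} are merely two equivalent forms of one differential relation, one weighted by $d^2$ and the other by $\partial^2$.

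Since the whole argument is essentially a one-line substitution, I do not anticipate any genuine obstacle. The only detail deserving a word of care is precisely the non-vanishing of $\delta$ near the fixed point, which is what licenses passing freely between the two weightings.
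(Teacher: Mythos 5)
Your proposal is correct and is essentially the paper's own argument: the paper likewise proves this by substituting $\delta = d/\partial$ from Theorem \ref{delta} into Theorem \ref{DEdelta}. Your extra remark on the non-vanishing of $\delta$ near the origin is a harmless (and reasonable) tidying of the cancellation step.
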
 

\begin{proof} 
Simply replace $\delta$ in Theorem \ref{DEdelta} by the ratio $d / \partial$ according to Theorem \ref{delta}. 
\end{proof} 

\medbreak 

We remark that the functions $c$ and $s$ similarly satisfy the differential equations
$$\partial ^2 \, (c\,')^2 = (1 - c^2) \, (\lambda^2 + \kappa^2 c^2)$$
and 
$$\partial^2 \, (s\,')^2 = (1 - s^2) \, (1 - \kappa^2 s^2).$$ 
Aside from the multiplier $\partial^2$, these are the familiar differential equations  
$$({\rm sn}\,')^2 = (1 - {\rm sn}^2) \, (1 - k^2 {\rm sn}^2)$$
$$({\rm cn}\,')^2 = (1 - {\rm cn}^2) \, (\ell^2 + k^2 {\rm cn}^2)$$
and 
$$({\rm dn}\,')^2 = (1 - {\rm dn}^2) \, ({\rm dn}^2 - \ell^2)$$ 
satisfied by the classical Jacobian elliptic functions ${\rm sn}$, ${\rm cn}$ and ${\rm dn}$ having modulus $k$ and complementary modulus $\ell$. 

\medbreak 

Going a little further, let us introduce the squared functions 
$$S = s^2, \; \; C = c^2 \; \; {\rm and} \; \; D = d^2.$$ 

\medbreak 

\begin{theorem} \label{squares} 
The squares $S, C$ and $D$ satisfy the differential equations 
$$\partial^2 \, (S\,')^2 = 4 S \, (1 - S) \, (1 - \kappa^2 S)$$
$$\partial^2 \, (C\,')^2 = 4 C \, (1 - C) \, (\lambda^2 + \kappa^2 C)$$ 
and 
$$\partial^2 \, (D\,')^2 = 4 D \, (1 - D) \, (D - \lambda^2).$$
\end{theorem}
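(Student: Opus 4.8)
The plan is to deduce each of the three equations from the corresponding first-order differential equation for the \emph{unsquared} function by a single application of the chain rule. The source equations are all already available: for $s$ and $c$ they are the two displayed formulas in the remark immediately preceding the theorem, namely $\partial^2 (s\,')^2 = (1 - s^2)(1 - \kappa^2 s^2)$ and $\partial^2 (c\,')^2 = (1 - c^2)(\lambda^2 + \kappa^2 c^2)$, while for $d$ it is Theorem \ref{DEpartial}.

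The key step is uniform across the three cases. If $X = x^2$ then $X\,' = 2 x x\,'$, so that $(X\,')^2 = 4 x^2 (x\,')^2 = 4 X (x\,')^2$; multiplying through by $\partial^2$ gives
$$\partial^2 (X\,')^2 = 4 X \cdot \partial^2 (x\,')^2.$$
Thus to establish each claim I would simply substitute the relevant source equation into the right-hand side and rewrite the factors in terms of the squared variable.

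Carrying this out for $D = d^2$: Theorem \ref{DEpartial} gives $\partial^2 (d\,')^2 = (1 - d^2)(d^2 - \lambda^2) = (1 - D)(D - \lambda^2)$, whence $\partial^2 (D\,')^2 = 4 D (1 - D)(D - \lambda^2)$. The cases $S = s^2$ and $C = c^2$ are identical in form, using the two remark equations and the substitutions $1 - s^2 = 1 - S$, $1 - \kappa^2 s^2 = 1 - \kappa^2 S$, and similarly $1 - c^2 = 1 - C$, $\lambda^2 + \kappa^2 c^2 = \lambda^2 + \kappa^2 C$.

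There is no genuine obstacle here: the analytic content was already packaged into the three unsquared differential equations, and passing to squares merely multiplies each cubic right-hand side by the extra factor $4 X$. The only point requiring care is bookkeeping — matching each square to its own source equation and transcribing the constant $4$ and the moduli $\kappa^2$, $\lambda^2$ without error.
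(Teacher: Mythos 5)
Your argument is correct and is exactly the paper's proof: the paper likewise derives each squared equation from the corresponding unsquared one (Theorem \ref{DEpartial} for $d$, and the two displayed remark equations for $s$ and $c$) via $X\,' = 2xx\,'$. You have merely written out the routine substitution that the paper leaves implicit.
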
 

\begin{proof} 
The last of the equations follows directly from Theorem \ref{DEpartial} on account of the fact that $D\,' = 2 d d\,'$; the first pair of equations follows similarly from the pair displayed immediately after the same Theorem. 
\end{proof} 

\medbreak 

The reason for our passage from Theorem \ref{DEdelta} to Theorem \ref{DEpartial} is that the function $\partial$ satisfies a manageable differential equation. 

\medbreak 

\begin{theorem} \label{partial}
$$9 \, (\partial\,')^2 = 2 \, (1 - \partial^2) \, (4 \partial^3 - 3 \partial + 1 - 2 \lambda^2) / \partial^2.$$ 
\end{theorem}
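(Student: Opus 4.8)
The plan is to express everything in terms of $\partial$ by differentiating its defining formula and then recycling the identities already in hand. First I would differentiate $\partial = \cos \tfrac{2}{3}\psi$ directly, obtaining $\partial\,' = -\tfrac{2}{3}\psi\,'\sin\tfrac{2}{3}\psi$ and hence
$$9(\partial\,')^2 = 4(\psi\,')^2\sin^2\tfrac{2}{3}\psi = 4(\psi\,')^2(1 - \partial^2).$$
This already isolates the factor $(1 - \partial^2)$ that appears on the right-hand side of the claimed equation, so the remaining task is to show that $2\,\partial^2(\psi\,')^2 = 4\partial^3 - 3\partial + 1 - 2\lambda^2$.

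Next I would compute $(\psi\,')^2$ as a function of $\partial$. The computation carried out in the proof of Theorem \ref{DEdelta} gives $\psi\,' = \kappa\cos\phi\,\delta/d$, and Theorem \ref{delta} in the form $\delta = d/\partial$ collapses this to $\psi\,' = \kappa\cos\phi/\partial$. Squaring and using
$$\kappa^2\cos^2\phi = \kappa^2 - \kappa^2\sin^2\phi = \kappa^2 - \sin^2\psi = d^2 - \lambda^2$$
(the middle equality from $\sin\psi = \kappa\sin\phi$, the last from $\sin^2\psi = 1 - d^2$ and $\lambda^2 = 1 - \kappa^2$), I would obtain $(\psi\,')^2 = (d^2 - \lambda^2)/\partial^2$.

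The final step converts $d^2$ into a polynomial in $\partial$. Since $2\psi = 3\cdot\tfrac{2}{3}\psi$, the triplication identity already invoked in the proof of Theorem \ref{ratio} gives $2d^2 - 1 = \cos 2\psi = 4\partial^3 - 3\partial$, so that $d^2 - \lambda^2 = \tfrac{1}{2}(4\partial^3 - 3\partial + 1 - 2\lambda^2)$. Substituting this into the expression for $(\psi\,')^2$ and then into the first display yields
$$9(\partial\,')^2 = 4(1 - \partial^2)\cdot\frac{4\partial^3 - 3\partial + 1 - 2\lambda^2}{2\partial^2} = 2(1 - \partial^2)(4\partial^3 - 3\partial + 1 - 2\lambda^2)/\partial^2,$$
which is the announced equation. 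I do not anticipate a serious obstacle: the argument is a matter of chaining the established identities in the right order, and the only point demanding a little care is the bookkeeping that rewrites $\kappa^2\cos^2\phi$ first as $d^2 - \lambda^2$ and then, via triplication, as a cubic in $\partial$.
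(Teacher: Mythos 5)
Your proposal is correct and follows essentially the same route as the paper's own proof: differentiate $\partial = \cos\tfrac{2}{3}\psi$, substitute $\psi\,' = \kappa\cos\phi/\partial$ from the computation in Theorem \ref{DEdelta} combined with $\delta = d/\partial$, rewrite $\kappa^2\cos^2\phi$ as $\cos^2\psi - \lambda^2$, and finish with the triplication identity from the proof of Theorem \ref{ratio}. You merely spell out a few intermediate steps that the paper leaves implicit.
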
 

\begin{proof} 
Essentially along the same lines as the proof of Theorem \ref{DEdelta}. Note that 
$$\partial \,' = - \tfrac{2}{3} \, \psi\,' \, \sin \tfrac{2}{3} \psi = - \tfrac{2}{3} \, \sin \tfrac{2}{3} \psi \, \kappa \, \cos \phi / \partial$$
from which deduce that 
$$(\partial \partial \,')^2 = \tfrac{4}{9} \, \sin^2 \tfrac{2}{3} \psi \, \kappa^2 \, \cos^2 \phi = \tfrac{4}{9} \, (1 - \cos^2 \tfrac{2}{3} \psi) \, (\kappa^2 - \sin^2 \psi)$$
and finish the proof by noting that 
$$\kappa^2 - \sin^2 \psi = \cos^2 \psi - \lambda^2 = \tfrac{1}{2} \, (4 \partial^3 - 3 \partial + 1) - \lambda^2$$
by reference to the proof of Theorem \ref{ratio}. 
\end{proof} 

\medbreak 

We may draw useful inferences from the fact that $\partial$ satisfies this differential equation. If we are willing to import a celebrated generalization of the classical Malmquist theorem due to Yosida, we may deduce at once that the differential equation of Theorem \ref{partial} has no transcendental meromorphic solutions (in the plane) because the right-hand side is not a polynomial in $\partial$; see Section 6 of Chapter 4 on the Riccati equation in [1]. Our purposes are served by more modest inferences with more elementary justifications. 

\medbreak 

As a first inference, we have the following. 

\medbreak 

\begin{theorem} \label{partialno}
A (nonconstant) meromorphic solution $\partial$ of the differential equation 
$$9 \, \partial^2 \, (\partial\,')^2 = 2 \, (1 - \partial^2) \, (4 \partial^3 - 3 \partial + 1 - 2 \lambda^2)$$ 
can have no zeros. 
\end{theorem}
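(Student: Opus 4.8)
The plan is to argue by contradiction through a local order-of-vanishing count. Suppose $\partial$ has a zero at some point $z_0$. Since $\partial$ is meromorphic, it is holomorphic at $z_0$ and vanishes there to some order $m \geq 1$, so $\partial(z) = a(z - z_0)^m + \cdots$ with $a \neq 0$; in particular $\partial'$ is also holomorphic at $z_0$. I would then compare the orders of vanishing at $z_0$ of the two sides of the differential equation, which must agree since the equation is a genuine identity.

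On the left, $9\,\partial^2(\partial')^2$ has leading term $9 a^4 m^2 (z-z_0)^{4m-2}$, so its order at $z_0$ is exactly $4m - 2 \geq 2$. On the right, the factor $1 - \partial^2$ tends to $1$ and contributes nothing to the order, so everything hinges on the cubic factor $4\partial^3 - 3\partial + 1 - 2\lambda^2$, whose value at $z_0$ is $1 - 2\lambda^2$.

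I expect two cases. If $\lambda^2 \neq \tfrac12$, the cubic factor is a nonzero constant at $z_0$, so the right side has order $0$; matching orders forces $4m - 2 = 0$, i.e. $m = \tfrac12$, which is not a positive integer. (Equivalently, one may simply evaluate the equation at $z_0$: the left side is $0$ while the right side is $2(1 - 2\lambda^2) \neq 0$.) If instead $\lambda^2 = \tfrac12$, then $1 - 2\lambda^2 = 0$ and the cubic factors as $\partial(4\partial^2 - 3)$, so near $z_0$ it has order $m$, the remaining factor $4\partial^2 - 3$ tending to $-3$; matching orders now forces $4m - 2 = m$, i.e. $m = \tfrac23$, again not a positive integer. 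Either way, no admissible $m$ exists, so $\partial$ can have no zero.

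The main obstacle is precisely the degenerate value $\lambda^2 = \tfrac12$, where the quick evaluation argument collapses—both sides vanish at $z_0$—and one must instead carry out the finer comparison of vanishing orders. Once the factorization $4\partial^3 - 3\partial + 1 - 2\lambda^2 = \partial(4\partial^2 - 3)$ at this value is noted, the non-integrality of the resulting $m$ closes the case, so the argument goes through uniformly in $\lambda$.
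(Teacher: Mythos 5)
Your proposal is correct and follows essentially the same route as the paper: evaluation at the zero forces $1 - 2\lambda^2 = 0$, and the degenerate case is then settled by exploiting the factor of $\partial$ in the cubic. The paper phrases this last step as cancelling one $\partial$ and re-evaluating to get $0 = -6$, whereas you phrase it as an order-of-vanishing count yielding $m = \tfrac{2}{3}$; these are the same argument in different dress.
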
 

\begin{proof} 
Deny: let $\partial$ be zero at some point $a$. Evaluation of the differential equation at $a$ forces $1 - 2 \lambda^2 = 0$. The differential equation then becomes 
$$9 \, \partial \, (\partial\,')^2 = 2 \, (1 - \partial^2) \, (4 \partial^2 - 3)$$ 
after an overall $\partial$ is cancelled. Finally, evaluation of this reduced differential equation at $a$ reveals the absurdity $0 = - 6$. 
\end{proof} 

\medbreak 

A further inference concerns the square (function, not operator) 
$$\nabla : = \partial^2.$$ 

\medbreak 

\begin{theorem} \label{DEnabla}
The function $\nabla = \partial^2$ satisfies 
$$\big[ \frac{9}{8} \, (\nabla\,')^2 + \Lambda \, (\nabla - 1) \big]^2 = \nabla \, (\nabla - 1)^2 \, (4 \nabla - 3)^2$$
where 
$$\Lambda = 1 - 2 \lambda^2.$$ 
\end{theorem}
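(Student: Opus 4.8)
The plan is to reduce the differential equation for $\partial$ in Theorem~\ref{partial} to one involving only $\nabla = \partial^2$ and its derivative. The first step is to clear the denominator in Theorem~\ref{partial}, arriving at the polynomial form
$$9\,\partial^2\,(\partial\,')^2 = 2\,(1-\partial^2)\,(4\partial^3 - 3\partial + 1 - 2\lambda^2)$$
already recorded in the statement of Theorem~\ref{partialno}. Next I would rewrite both sides in terms of $\nabla$. Since $\nabla\,' = 2\,\partial\,\partial\,'$, squaring gives $(\nabla\,')^2 = 4\,\partial^2\,(\partial\,')^2$, so the left-hand side becomes $\tfrac{9}{4}\,(\nabla\,')^2$; and $1 - \partial^2 = 1 - \nabla$. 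After dividing throughout by $2$, the equation reads
$$\tfrac{9}{8}\,(\nabla\,')^2 = (1 - \nabla)\,(4\partial^3 - 3\partial + 1 - 2\lambda^2).$$

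The crucial observation is that the cubic $4\partial^3 - 3\partial + 1 - 2\lambda^2$ is not a polynomial in $\nabla$, but its offending odd part collects into a single factor of $\partial$: namely $4\partial^3 - 3\partial = \partial\,(4\nabla - 3)$, so that $4\partial^3 - 3\partial + 1 - 2\lambda^2 = \partial\,(4\nabla - 3) + \Lambda$ with $\Lambda = 1 - 2\lambda^2$. Substituting and separating the even term $\Lambda\,(1 - \nabla)$ onto the left, and using $-\Lambda\,(1 - \nabla) = \Lambda\,(\nabla - 1)$, the equation takes the form
$$\tfrac{9}{8}\,(\nabla\,')^2 + \Lambda\,(\nabla - 1) = (1 - \nabla)\,\partial\,(4\nabla - 3).$$

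The right-hand side now carries exactly one factor of $\partial$, so squaring eliminates it: the right-hand side becomes $(1 - \nabla)^2\,\partial^2\,(4\nabla - 3)^2 = \nabla\,(\nabla - 1)^2\,(4\nabla - 3)^2$, using $\partial^2 = \nabla$ and $(1 - \nabla)^2 = (\nabla - 1)^2$. This is precisely the asserted identity. I expect no genuine obstacle here: the only subtlety is the bookkeeping needed to group all the even (rational-in-$\nabla$) terms on one side so that a single odd multiple of $\partial$ remains on the other, after which one clean squaring delivers the result.
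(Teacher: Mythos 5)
Your proposal is correct and follows essentially the same route as the paper: clear the denominator in Theorem \ref{partial}, use $(\nabla\,')^2 = 4\,\partial^2(\partial\,')^2$ and $4\partial^3 - 3\partial = \partial\,(4\nabla - 3)$ to isolate a single factor of $\partial$ on the right, then square. The bookkeeping is exactly as in the paper's proof.
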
 

\begin{proof} 
From $\nabla = \partial^2$ it follows that $\nabla\,' = 2 \partial \, \partial\,'$ whence Theorem \ref{partial} yields 
$$9 \, (\nabla\,')^2 = 8 \, (1 - \nabla) \, \big[(4 \nabla - 3) \, \partial + 1 - 2 \lambda^2 \big]$$
or 
$$\frac{9}{8} \, (\nabla\,')^2 + \Lambda \, (\nabla - 1) = (1 - \nabla) \, (4 \nabla - 3) \, \partial$$
and squaring concludes the argument. 
\end{proof} 

\medbreak 

Theorem \ref{partialno} now improves as follows. 

\medbreak 

\begin{theorem} \label{nablano}
A (nonconstant) meromorphic solution $\nabla$ of the differential equation 
$$\big[ \frac{9}{8} \, (\nabla\,')^2 + \Lambda \, (\nabla - 1) \big]^2 = \nabla \, (\nabla - 1)^2 \, (4 \nabla - 3)^2$$ 
can have no zeros. 
\end{theorem}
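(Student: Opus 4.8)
The plan is to argue by contradiction through a local analysis at an alleged zero, in the spirit of the proof of Theorem \ref{partialno} but now tracking orders of vanishing rather than merely evaluating at the point. Suppose then that a nonconstant meromorphic solution $\nabla$ vanishes to order $m \geq 1$ at some point $a$, and abbreviate the left-hand bracket as $B := \frac{9}{8}(\nabla\,')^2 + \Lambda(\nabla - 1)$, so that the differential equation reads $B^2 = \nabla(\nabla - 1)^2(4\nabla - 3)^2$. Because $\nabla(a) = 0$, both factors $(\nabla - 1)^2$ and $(4\nabla - 3)^2$ are nonzero at $a$ (taking the values $1$ and $9$ there), so the right-hand side vanishes at $a$ to order exactly $m$.

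The first step is to extract a parity constraint on $m$. Since $\nabla$ is nonconstant the right-hand side is not identically zero (it cannot be, as $\nabla$ is then identically neither $0$, $1$, nor $\tfrac34$), hence $B \not\equiv 0$ and $\mathrm{ord}_a(B)$ is a finite nonnegative integer; comparing orders in $B^2 = \nabla(\nabla - 1)^2(4\nabla - 3)^2$ gives $2\,\mathrm{ord}_a(B) = m$, so $m$ is even and in particular $m \geq 2$. The second step evaluates $B$ at $a$ directly: with $m \geq 2$ the derivative $\nabla\,'$ vanishes at $a$, so $B(a) = \Lambda(\nabla(a) - 1) = -\Lambda$; but $\mathrm{ord}_a(B) = m/2 \geq 1$ forces $B(a) = 0$, whence $\Lambda = 0$.

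The final step is to feed the newly forced value $\Lambda = 0$ back into the differential equation, which then collapses to $\frac{81}{64}(\nabla\,')^4 = \nabla(\nabla - 1)^2(4\nabla - 3)^2$. A last order comparison at $a$ now pits the left-hand order $4(m - 1)$ against the right-hand order $m$; since $m \geq 2$ we have $4(m - 1) = 4m - 4 > m$, so the two sides vanish to different orders, which is impossible. This contradiction shows that $\nabla$ can have no zero, exactly as the earlier evaluation trick excluded zeros of $\partial$ in Theorem \ref{partialno}.

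I expect the only delicate point to be the bookkeeping in the first step: one must be sure that $B$ is genuinely meromorphic, so that $\mathrm{ord}_a(B)$ is an integer and the parity conclusion is legitimate, and that $B \not\equiv 0$, so that its order is finite. Both facts follow from $\nabla$ being a nonconstant meromorphic function, and everything else is routine order-counting.
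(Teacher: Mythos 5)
Your proof is correct, but it takes a genuinely different route from the paper's. The paper differentiates the equation $B^2 = q \circ \nabla$ (where $B = \tfrac{9}{8}(\nabla\,')^2 + \Lambda(\nabla - 1)$ and $q(z) = z(z-1)^2(4z-3)^2$), cancels the common factor $\nabla\,'$ to obtain $2B\,(\tfrac{9}{4}\nabla\,'' + \Lambda) = q\,'\circ\nabla$, and evaluates at an alleged zero $a$: the left side vanishes because $B(a)^2 = q(0) = 0$, while the right side equals $q\,'(0) = 9$. That is a single evaluation, uniform in $\Lambda$, in the spirit of the classical derivation of the second-order equation for $\wp$. You instead never differentiate the equation and run a local order count: the parity constraint $2\,\mathrm{ord}_a(B) = \mathrm{ord}_a(\nabla) = m$ forces $m$ even, hence $m \geq 2$ and $\nabla\,'(a) = 0$, which pins down $B(a) = -\Lambda$ and so forces $\Lambda = 0$; the collapsed equation $\tfrac{81}{64}(\nabla\,')^4 = \nabla(\nabla-1)^2(4\nabla-3)^2$ then yields the order mismatch $4(m-1) \neq m$. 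Your bookkeeping is sound: $B$ is meromorphic and holomorphic at $a$, it is not identically zero because the right-hand side is not (as $\nabla$ is nonconstant, hence not identically $0$, $1$, or $\tfrac{3}{4}$), and the stated orders are exact since $(\nabla-1)^2$ and $(4\nabla-3)^2$ are units at $a$. The paper's argument is shorter and requires no case split on $\Lambda$; yours buys a certain robustness, in that it only compares orders of vanishing and never needs to justify cancelling $\nabla\,'$ across its zero set (which the paper does implicitly via the identity theorem).
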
 

\begin{proof} 
Let us write $q$ for the quintic given by 
$$q(z) = z \, (z - 1)^2 \, (4 z - 3)^2$$
so that 
$$q(0) = 0 \; \; {\rm and} \; \; q'(0) = 9.$$
Let $\nabla$ be a meromorphic solution of the given differential equation 
$$\big[ \frac{9}{8} \, (\nabla\,')^2 + \Lambda \, (\nabla - 1) \big]^2 = q \circ \nabla$$
and differentiate 
$$2 \, \big[ \frac{9}{8} \, (\nabla\,')^2 + \Lambda \, (\nabla - 1) \big] \, (\frac{9}{4} \, \nabla\,' \, \nabla\,'' + \Lambda \, \nabla\,') = (q \,' \circ \nabla) \, \nabla\,'$$ 
whence by cancellation of $\nabla\,'$ we deduce that 
$$2 \, \big[ \frac{9}{8} \, (\nabla\,')^2 + \Lambda \, (\nabla - 1) \big] \, (\frac{9}{4}\, \nabla\,'' + \Lambda) = (q \,' \circ \nabla).$$ 
If possible, suppose that $\nabla (a) = 0$ and evaluate this last differential equation at $a$: the left-hand side is zero on account of the original differential equation and the fact that $q(0) = 0$; $q\,'(0) = 9$ tells us that the right-hand side is nonzero. Impossible. 
\end{proof} 

\medbreak 

We are now in a position to answer questions of ellipticity. We answer this question first for the function $\nabla$: when we say below that $\nabla$ is not elliptic, we of course mean that $\nabla$ does not admit an elliptic extension from the small disc about $0$ on which it is originally defined; likewise for the functions that we discuss subsequently. 

\medbreak 

\begin{theorem} \label{nablanotE}
The function $\nabla = \cos^2 \tfrac{2}{3} \psi$ is not elliptic. 
\end{theorem}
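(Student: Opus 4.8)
The plan is to argue by contradiction, leveraging Theorem \ref{nablano}: a nonconstant meromorphic solution of the differential equation satisfied by $\nabla$ can have no zeros. So I would suppose that $\nabla$ does admit an elliptic extension and derive a contradiction with this zero-free conclusion.

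First I would observe that the original $\nabla$, defined on the small disc about $0$, satisfies the algebraic differential equation of Theorem \ref{DEnabla} there. If $\nabla$ extends to an elliptic function $N$ on all of $\mathbb{C}$, then $N$ agrees with $\nabla$ on the disc; since both sides of that differential equation are meromorphic functions built algebraically from $N$ and $N\,'$, and they coincide on the disc, the identity theorem forces $N$ to satisfy the same differential equation throughout $\mathbb{C}$. Thus $N$ is a meromorphic solution of the equation appearing in Theorem \ref{nablano}.

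Next I would check that $N$ is nonconstant: indeed $\nabla = \cos^2 \tfrac{2}{3} \psi$ takes the value $1$ at $0$ and is not locally constant, because $\psi$ is nonconstant (from $\sin \psi = \kappa \sin \phi$ with $\kappa \ne 0$). Hence $N$ is a nonconstant meromorphic solution, and Theorem \ref{nablano} tells us that $N$ can have no zeros anywhere in $\mathbb{C}$.

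Finally I would invoke Liouville's theorem for elliptic functions to reach the contradiction. A nonconstant elliptic function must attain the value $0$: if $N$ had no zeros, then $1/N$ would be an elliptic function with no poles, hence constant by Liouville, making $N$ constant — a contradiction. This shows that no elliptic extension of $\nabla$ exists. I do not anticipate any serious obstacle; the only point meriting a word of care is the propagation of the differential equation from the disc to all of $\mathbb{C}$ by the identity theorem, which is routine.
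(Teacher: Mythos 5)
Your proposal is correct and follows the same route as the paper: the paper's proof is the one-line observation that nonconstant elliptic functions have zeros, so ellipticity of $\nabla$ would contradict Theorem \ref{nablano}. You merely spell out the (routine) supporting details — propagation of the differential equation by the identity theorem, nonconstancy, and the Liouville argument that a zero-free elliptic function is constant.
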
 

\begin{proof} 
Because (nonconstant) elliptic functions have zeros, this is an immediate corollary of Theorem \ref{nablano}. 
\end{proof} 

\medbreak 

Of course, it follows that $\partial = \cos \tfrac{2}{3} \psi$ is also not elliptic. 

\medbreak 

\begin{theorem} \label{SCDnotE}
The functions $S = \sin^2 \phi, \; C = \cos^2 \phi$ and $D = \cos^2 \psi$ are not elliptic. 
\end{theorem}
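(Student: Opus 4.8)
The plan is to leverage the differential equations of Theorem \ref{squares} together with the already-established non-ellipticity of $\nabla$ (Theorem \ref{nablanotE}). Each of the three equations there expresses $\partial^2 = \nabla$ as a rational combination of one of $S, C, D$ and its derivative; for instance, the first reads $\nabla \, (S\,')^2 = 4 S (1 - S)(1 - \kappa^2 S)$, so that away from the zeros of $S\,'$ we have
$$\nabla = \frac{4 S (1 - S)(1 - \kappa^2 S)}{(S\,')^2}.$$
Entirely analogous expressions for $\nabla$ in terms of $C, C\,'$ and in terms of $D, D\,'$ follow from the second and third equations of the same Theorem.

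First I would record the elementary fact that the elliptic functions form a field closed under differentiation: if $X$ admits an elliptic extension then so does $X\,'$, and any rational expression in $X$ and $X\,'$ is again elliptic (provided it is not identically $\infty$). Since $\phi$ is nonconstant, so is each of $S, C, D$, whence the relevant derivative is not identically zero and the displayed quotient is a genuine meromorphic function.

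The core of the argument is then a proof by contradiction through the identity theorem. Suppose, say, that $S$ admitted an elliptic extension $\widetilde{S}$ from the small disc about $0$. Then $4 \widetilde{S}(1 - \widetilde{S})(1 - \kappa^2 \widetilde{S}) / (\widetilde{S}\,')^2$ would be an elliptic function agreeing with $\nabla$ on the punctured disc where $S\,' \neq 0$; by the identity theorem it would supply an elliptic extension of $\nabla$, contradicting Theorem \ref{nablanotE}. Running the same reasoning with the second and third equations of Theorem \ref{squares} rules out elliptic extensions of $C$ and of $D$ respectively, establishing the claim.

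I expect the only delicate point to be the bookkeeping about zeros and poles: one must check that the quotient defining $\nabla$ really does extend meromorphically across the zeros of $S\,'$ (so that the right-hand side is a legitimate elliptic function, not merely meromorphic off a curve) and that nonconstancy of $S, C, D$ — inherited from that of $\phi$ — prevents the denominators from vanishing identically. These verifications are routine once the field-theoretic framework is in place, so the real content of the proof is the single reduction of each case to the non-ellipticity of $\nabla$ secured in Theorem \ref{nablanotE}.
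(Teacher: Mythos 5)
Your proposal is correct and follows essentially the same route as the paper: solve each equation of Theorem \ref{squares} for $\nabla$ as a rational expression in the relevant square and its derivative, and conclude that ellipticity of $S$, $C$ or $D$ would transfer to $\nabla$, contradicting Theorem \ref{nablanotE}. The paper merely runs the argument with $D$ first and notes that $S$ and $C$ follow either the same way or via the algebraic identities $d^2 + \kappa^2 s^2 = 1$ and $c^2 + s^2 = 1$.
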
 

\begin{proof} 
Refer to Theorem \ref{squares}: the equation for $D = d^2$ may be reformulated as 
$$\nabla = 4 D \, (1 - D) \, (D - \lambda^2) / (D\,')^2;$$
accordingly, ellipticity of $D$ would force ellipticity upon $\nabla$ and thereby contradict Theorem \ref{nablanotE}. That $S = s^2$ and $C = c^2$ are not elliptic follows similarly or by virtue of the identities $d^2 + \kappa^2 s^2 = 1$ and $c^2 + s^2 = 1$. 
\end{proof} 

\medbreak 

Of course, $s, c$ and $d$ are also not elliptic. 

\medbreak 

We offer one more result of a similar nature, pertaining to $t = s/c$ and its square $T = t^2$. 

\medbreak 

\begin{theorem} \label{t}
$T = s^2 / c^2$ satisfies the differential equation 
$$\nabla (T\,')^2 = 4 T \, (1 + T) \, (1 + \lambda^2 T)$$ 
whence neither $T$ nor $t = s/c$ is elliptic. 
\end{theorem}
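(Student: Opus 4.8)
The plan is to derive the stated differential equation from the one governing $S = s^2$ in Theorem \ref{squares}, and then to settle ellipticity by the device already used in Theorem \ref{SCDnotE}. The key observation is that in the squared notation the relation $c^2 + s^2 = 1$ reads $C = 1 - S$, so that $T = S/C = S/(1-S)$. Differentiating by the quotient rule, the term $C' = -S'$ produces a convenient cancellation, and I expect the clean form
$$T\,' = \frac{S\,'(1-S) + S\,S\,'}{(1-S)^2} = \frac{S\,'}{(1-S)^2}.$$
Squaring this and multiplying through by $\nabla$ lets me invoke the first equation of Theorem \ref{squares}, namely $\nabla\,(S\,')^2 = 4 S (1-S)(1 - \kappa^2 S)$, to obtain
$$\nabla\,(T\,')^2 = \frac{\nabla\,(S\,')^2}{(1-S)^4} = \frac{4 S (1 - \kappa^2 S)}{(1-S)^3}.$$

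Next I would pass to the variable $T$ itself. Inverting $T = S/(1-S)$ gives $S = T/(1+T)$ and $1 - S = 1/(1+T)$, and since $\lambda^2 = 1 - \kappa^2$ a short computation yields $1 - \kappa^2 S = (1 + \lambda^2 T)/(1+T)$. Substituting these three expressions into the display above and clearing the powers of $(1+T)$ should collapse everything to $4 T (1+T)(1 + \lambda^2 T)$, which is the asserted differential equation. This algebraic substitution is the only place where any care is needed, and even it is entirely routine; the genuine content lies in the cancellation that produces the tidy expression for $T\,'$.

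Finally, for the ellipticity conclusion I would argue exactly as in Theorem \ref{SCDnotE}. Since $\phi\,' = \delta$ does not vanish at the origin, $T$ is nonconstant, so $T\,'$ is not identically zero and the differential equation may be rewritten as
$$\nabla = \frac{4 T (1+T)(1 + \lambda^2 T)}{(T\,')^2}.$$
Were $T$ elliptic, the right-hand side would be a rational expression in the elliptic functions $T$ and $T\,'$, forcing $\nabla$ to be elliptic and thereby contradicting Theorem \ref{nablanotE}. The non-ellipticity of $t = s/c$ then follows immediately, because $T = t^2$ would be elliptic if $t$ were. I anticipate no real obstacle in this last step: once the differential equation is in hand, the ellipticity argument is a verbatim repetition of the earlier pattern, and the whole difficulty of the theorem is concentrated in the bookkeeping of the rational substitution $S \leftrightarrow T$.
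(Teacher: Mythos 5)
Your proposal is correct, but it reaches the differential equation by a different route from the paper. The paper differentiates $t = s/c$ directly: from $s\,' = c\,\delta$ and $c\,' = -s\,\delta$ it gets $t\,' = (1+t^2)\,\delta$, then substitutes $\delta = d/\partial$ and the identity $d^2 = (1+\lambda^2 t^2)/(1+t^2)$ before converting to $T$ via $T\,' = 2t\,t\,'$. You instead treat the $T$--equation as a formal consequence of the already-established equation $\nabla\,(S\,')^2 = 4S(1-S)(1-\kappa^2 S)$ of Theorem \ref{squares}, transported through the rational substitution $T = S/(1-S)$; your intermediate computations ($T\,' = S\,'/(1-S)^2$, $1-\kappa^2 S = (1+\lambda^2 T)/(1+T)$, and the final collapse to $4T(1+T)(1+\lambda^2 T)$) all check out. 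What your route buys is economy of new input -- everything is pushed back onto Theorem \ref{squares} and a M\"obius-type change of variable, which makes clear that the $T$--equation is not independent information; what the paper's route buys is self-containment at the level of the trigonometric functions $s$, $c$, $d$, $\partial$, and the pleasant intermediate identity $t\,' = (1+t^2)\,\delta$. The ellipticity conclusion is handled identically in both: rewrite the equation as $\nabla = 4T(1+T)(1+\lambda^2 T)/(T\,')^2$ and appeal to Theorem \ref{nablanotE}, with non-ellipticity of $t$ following since $T = t^2$. Your remark that $T$ is nonconstant (so that division by $(T\,')^2$ is legitimate) is a small point the paper leaves tacit, and it is correctly justified by $t\,'(0) = \delta(0) \neq 0$.
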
 

\begin{proof} 
Differentiate $t$: as $s\,' = c \, \delta$ and $c\,' = -s \, \delta$ there follows
$$t\,' = (1 + t^2) \, \delta$$
on account of the identity $c^2 + s^2 = 1$, which of course also implies that $c^2 = 1/(1 + t^2)$. Square and use 
$$d^2 = 1 - \kappa^2 s^2 = \lambda^2 + \kappa^2 c^2 = \frac{1 + \lambda^2 t^2}{1 + t^2}$$
to deduce that 
$$(t\,')^2 = (1 + t^2)^2 \, \delta^2 = (1 + t^2)^2 d^2 /\partial^2 = (1 + t^2)(1+ \lambda^2 t^2)/\partial^2$$
and rearrange using $T\,' = 2 t \, t\,'$ to end the proof. 
\end{proof} 

\medbreak 

\bigbreak

\begin{center} 
{\small R}{\footnotesize EFERENCES}
\end{center} 
\medbreak 

[1] E. Hille, {\it Ordinary Differential Equations in the Complex Domain}, Wiley-Interscience (1976); Dover Publications (1997). 

\medbreak 

[2] P.L. Robinson, {\it Elliptic functions from $F(\frac{1}{3}, \frac{2}{3} ; \frac{1}{2} ; \bullet)$}, arXiv 1907.09938 (2019). 

\medbreak 

[3] P.L. Robinson, {\it Elliptic functions from $F(\tfrac{1}{4}, \tfrac{3}{4}; \tfrac{1}{2} ; \bullet)$}, arXiv 1908.01687 (2019). 

\medbreak 

[4] Li-Chien Shen, {\it On the theory of elliptic functions based on $_2F_1(\frac{1}{3}, \frac{2}{3} ; \frac{1}{2} ; z)$}, Transactions of the American Mathematical Society {\bf 357}  (2004) 2043-2058. 

\medbreak 

[5] Li-Chien Shen, {\it A note on Ramanujan's identities involving the hypergeometric function $F(\tfrac{1}{6}, \tfrac{5}{6}; 1 ; z)$}, Ramanujan Journal {\bf 30} (2013) 211-222. 

\medbreak 

[6] Li-Chien Shen, {\it On a theory of elliptic functions based on the incomplete integral of the hypergeometric function $_2 F_1 (\frac{1}{4}, \frac{3}{4} ; \frac{1}{2} ; z)$}, Ramanujan Journal {\bf 34} (2014) 209-225. 

\medbreak

\medbreak

\end{document}